\newcommand{\surjects}{\twoheadrightarrow}
\newcommand{\R}{\mathbb{R}}
\newcommand{\C}{\mathbb{C}}
\newcommand{\PRD}{\textbf{P}\mathbb{R}^{{\binom{D+2}{2}}}}
\newcommand{\PCD}{\textbf{P}\mathbb{C}^{\binom{D+2}{2}}}
\newcommand{\e}{\varepsilon}
\newcommand{\f}{\varphi}
\newtheoremstyle{problemstyle}  
{3pt}                                               
{3pt}                                               
{\normalfont}                               
{}                                                  
{\bfseries}                 
{\bfseries .}         
{.5em}                                          
{}                                                  
\theoremstyle{problemstyle}
\newtheorem{thm}{Theorem}[section]
\newtheorem{lem}[thm]{Lemma}
\newtheorem{dfn}[thm]{Definition}
\title{Distinct Distances Between a Circle and a Generic Set}
\author{Alex McDonald, Brian McDonald, Jonathan Passant and Anurag Sahay}
\date{\today}
\begin{document}

\maketitle

\begin{abstract}
	Let $S$ be a set of points in $\R^2$ contained in a circle and $P$ an unrestricted point set in $\R^2$. We prove the number of distinct distances between points in $S$ and points in $P$ is at least 	$\min(|S||P|^{1/4-\e},|S|^{2/3}|P|^{2/3},|S|^2,|P|^2)$.
	This builds on work of Pach and De Zeeuw \cite{PdZ17}, Bruner and Sharir \cite{BS18}, McLaughlin and Omar \cite{MO18} and Mathialagan \cite{M19} on distances between pairs of sets.
\end{abstract}

\section{Introduction}

In 1945 Erd\H os introduced his distinct-distances problem, first stated in \cite{E45}, asking for the minimum number of distinct distances an $n$ point set can create  in $\mathbb{R}^2$. Erd\H os showed that a square lattice $\Lambda$ of $n$ points determined $|\Delta(\Lambda)|\lesssim \frac{n}{\sqrt{\log n}}$ distances, where here and throughout, $\gtrsim $ and $\lesssim$ are used to suppress some constant independent of the controlling parameter and $\Delta(S)$ denotes the set of distances between elements of $S$. Erd\H os conjectured that this was essentially the best possible i.e. for any $n$ point set $P$ one has $|\Delta(P)|\gtrsim n^{1-\varepsilon}$, for all $\varepsilon>0$. This question played a key role in the combinatorial geometry for over 50 years, with many successive improvements, see \cite[Section 5.3]{BMP05} or \cite{GIS11}. In \cite{GK15} Guth and Katz provided a solution utilising significant new algebraic developments in what has become known as the polynomial method in combinatorics. \\

A natural variant of the distinct distances problem asks for the minimum number of distances between points $a\in A,b\in B$ where one or both of the finite sets $A$ and $B$ are constrained in some way.  Purdy \cite[Section 5.5]{BMP05} considers a version of this problem where $\ell_1$ and $\ell_2$ are lines in the plane, and $A$ and $B$ consist of $n$ points on $\ell_1$ and $\ell_2$, respectively.  He observed that if $\ell_1$ and $\ell_2$ are parallel or perpendicular then one may imitate the grid example given by Erd\H os to obtain $\lesssim n$ distances, but conjectured that otherwise the number of distnaces was superlinear.  More precisely, he conjectured that for every $C$ there exists $n_0$ such that if $A\subset \ell_1,B\subset\ell_2$ each have size $n>n_0$ and determine $<Cn$ distances, then $\ell_1$ and $\ell_2$ are either parallel or perpendicular. Elekes and R\'onyai \cite{ER00} prove this conjecture from a statement about restricted polynomials, implicitly showing that there exists $\delta>0$ such that $\Delta(A,B)\gtrsim n^{1+\delta}$ under the conditions of the conjecture. Elekes \cite{E99} subsequently showed that one can in fact take $\delta=\frac{1}{4}$. Schwartz, Solymosi, and de Zeeuw prove that in the unbalanced version of the problem, where $|A|=n^{\frac{1}{2}+\e}$ and $|B|=n$, the number of distances is still superlinear.  The results in both the balanced and unbalanced cases were improved by Sharir, Sheffer and Solymosi \cite{SSS13} who use algebraic techniques to show that sets $A$ and $B$ of size $n$ and $m$, respectively, satisfying the hypotheses above, determine $\gtrsim\min\{n^{2/3}m^{2/3}, n^2, m^2\}$ distances. \\

This question has been generalised in two key ways. Pach and de Zeeuw \cite{PdZ17} showed that if you restrict your point sets to two algebraic curves $C_1$ and $C_2$ of constant degree (constant with respect to $n$ and $m$ the number of points on $C_1$ and $C_2$ respectively), then one has at least $\gtrsim \min\{n^{2/3}m^{2/3}, n^2, m^2\}$ distinct distances, provided the curves are not parallel lines, orthogonal lines, or concentric circles. This argument used heavily that both point sets are on curves, so their role could be interchanged and that curves that are not  parallel lines, orthogonal lines, or concentric circles don't share too many symmetries. \\

In a different direction Bruner and Sharir \cite{BS18} showed that when the first set $P$ of $m$ points is on a line and the second $P'$ of $n$ points is unrestricted in the plane one has at least

$$ |\Delta(P,P')|  \gtrsim \min\left\{n^{2/3}m^{2/3}, \frac{m^{10/11}n^{4/11}}{\log^{2/11}n}, n^2,m^2\right\}.$$ 

This result relied on the explicit parametrisation of the line to build an incidence problem. Similarly McLaughlin and Omar \cite{MO18} showed that if you restrict $m$ points $P$ to a curve of constant degree and have $n$ unrestricted points $P'$ one has

$$ |\Delta(P,P')| \gtrsim \begin{cases}m^{1/2}n^{1/2}\log^{-1/2}n & \text{when } m \gtrsim n^{1/2}\log^{-1/3}n,\\
m^{1/3}n^{1/2} & \text{when } m \lesssim n^{1/2}\log^{-1/3}n.\end{cases}$$

Finally, Mathialagan \cite{M19} extended these results in $\R^2$ to the setting where $P$ and $P'$ are both unrestricted point sets (of size $m$ and $n$ respectively) 

$$ |\Delta(P,P')| \gtrsim \begin{cases} m^{1/2}n^{1/2} \log^{-1} n & \text{when } n^{1/3} \leq m \leq n, \\ m^{1/2}n^{1/2} & \text{when } m\leq n^{1/3} \end{cases} $$

Since $P$ and $P'$ are symmetric in this case, analogous bounds hold when $m \geq n$. In particular, this subsumes McLaughlin-Omar's result.  In the balanced case ($n^{1/3}\leq m\leq n$), Mathialagan's result is obtained by adapting the Guth-Katz argument \cite{GK15} to the question of distances determined by two sets (rather than one).  In the unbalanced case ($2\leq m\leq n^{1/3}$) Mathialagan shows that in fact there are $\gtrsim (nm)^{1/2}$ pinned distances determined by the bigger set and a single point in the smaller set.  It is interesting to compare this result to those which assume algebraic structure on one or both of their sets.  In our result, as well as the results of Bruner-Sharir and Pach-de Zeeuw, the bounds obtained are better than Mathialagan's in the near-balanced case (how close to balanced the sets must be depends on the particular result) but in extremely lopsided cases the $(mn)^{1/2}$ bound eventually wins. \\

The aim of this paper is to find similar results in the case where one set lies on a circle and the second set is essentially unrestricted. We obtain the following result.

\begin{thm}\label{Thm: MainResult}Suppose that $S$ is a point set on a circle in $\mathbb{R}^2$ and $P$ is a point set in $\mathbb{R}^2$ such that no two points of $P$ are on any concentric circle.  Then we have,
	\[ |\Delta(S,P)| \gtrsim \min(|S||P|^{1/4-\e},|S|^{2/3}|P|^{2/3},|S|^2,|P|^2).\]
\end{thm}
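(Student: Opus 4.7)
Let $m=|S|$, $n=|P|$, $t=|\Delta(S,P)|$. Introduce the distance energy
\[ Q \;:=\; \sum_d E_d^2 \;=\; \bigl|\{(s_1,s_2,p_1,p_2)\in S^2\times P^2 : |s_1-p_1|=|s_2-p_2|\}\bigr|, \]
where $E_d = |\{(s,p)\in S\times P : |s-p|=d\}|$. Since $\sum_d E_d = mn$, Cauchy--Schwarz gives $tQ\geq m^2n^2$, so the theorem reduces to establishing the upper bound
\[ Q \;\lesssim\; mn^{7/4+\e} + m^{4/3}n^{4/3} + n^2 + m^2. \]

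The plan is to attack $Q$ via the Elekes--Sharir framework. Each quadruple in $Q$ is witnessed by a rigid motion $\mu\in SE(2)$ with $\mu(s_1)=s_2$ and $\mu(p_1)=p_2$; for each ordered pair $(a,b)\in \R^2\times \R^2$ with $a\neq b$, the locus $L_{a,b}=\{\mu:\mu(a)=b\}$ is a bounded-degree algebraic curve in the three-dimensional group $SE(2)$, and $Q$ counts the incidences between the $m^2$ $S$-curves $\{L_{s_1,s_2}\}$ and the $n^2$ $P$-curves $\{L_{p_1,p_2}\}$. The algebraic input from the hypotheses is this: since $S$ lies on a circle with center $c$, every curve $L_{s_1,s_2}$ passes through the rotation about $c$ by angle $\theta_{s_2}-\theta_{s_1}$, giving the $S$-family a distinguished common feature; dually, the assumption that no two points of $P$ lie on a common circle about $c$ says precisely that no non-trivial rotation about $c$ pairs two points of $P$, and so prevents the $P$-curves from clustering along the distinguished ``rotations about $c$'' subvariety of $SE(2)$. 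This is the circle analogue of the concentric-circle exclusion in Pach--de Zeeuw \cite{PdZ17} and the orthogonal/parallel-line exclusion in Bruner--Sharir \cite{BS18} and earlier works.

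A direct Pach--Sharir / Szemer\'edi--Trotter curve-incidence bound in $SE(2)$, using the no-concentric hypothesis to cap the pairwise intersection multiplicity at $O(1)$, delivers $Q\lesssim (m^2n^2)^{2/3}+m^2+n^2$, which yields the middle three terms of the theorem. The new term $mn^{1/4-\e}$ --- equivalently $Q\lesssim mn^{7/4+\e}$ --- is the circle analogue of Elekes' exponent $\delta=1/4$ for distinct distances between two generic lines, and is where the main work lies. To obtain it I would dyadically partition $P$ by the radial distance $\rho_p=|p-c|$ and, within each dyadic annulus, use the polar expansion
\[ |s-p|^2 \;=\; r^2+\rho_p^2-2r\rho_p\cos(\theta_s-\varphi_p) \]
(with $(\rho_p,\varphi_p)$ polar coordinates of $p$ about $c$, and $r$ the radius of the circle) to reduce the annulus' contribution to an essentially one-variable Elekes--R\'onyai / Szemer\'edi--Trotter estimate comparing the angles $\theta_s$ on the circle to the angles $\varphi_p$ of points of $P$; the $\e$-loss should be absorbed from the dyadic summation together with a Szemer\'edi--Trotter-with-multiplicity application, in the spirit of \cite{BS18,M19}.

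The main obstacle will be this last term: one has to convert the qualitative no-concentric hypothesis into a \emph{quantitative} incidence bound that survives the dyadic aggregation, and has to glue the per-annulus one-variable estimate cleanly onto the three-dimensional curve-incidence picture in $SE(2)$. A secondary subtlety is the translation locus of $SE(2)$ (the ``rotations at infinity''), where the standard algebraic incidence machinery degrades and which will likely have to be treated as a separate degenerate case.
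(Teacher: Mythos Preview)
Your Cauchy--Schwarz reduction to the quadruple count $Q$ matches the paper, but from that point on the two arguments diverge, and your proposal has a genuine gap at the decisive step.

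You set up an Elekes--Sharir curve--curve incidence problem in the three-dimensional group $SE(2)$: quadruples correspond to intersections between the $m^2$ curves $L_{s_1,s_2}$ and the $n^2$ curves $L_{p_1,p_2}$. You then claim that ``a direct Pach--Sharir / Szemer\'edi--Trotter curve-incidence bound in $SE(2)$'' yields $Q\lesssim (m^2n^2)^{2/3}+m^2+n^2$. That inference is not valid: Pach--Sharir and Szemer\'edi--Trotter are \emph{point--curve} bounds in the plane, not curve--curve bounds in a three-dimensional space. The correct tool in the Elekes--Sharir framework is the Guth--Katz line-incidence machinery, and (as in Mathialagan's adaptation) it gives only $Q\lesssim (mn)^{3/2}\log$, i.e.\ $\Delta\gtrsim (mn)^{1/2}/\log$, not the $m^{4/3}n^{4/3}$ term you assert. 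More seriously, for the main term $Q\lesssim mn^{7/4+\e}$ you offer only a heuristic: dyadically slice $P$ into annuli, use the cosine formula, and hope for an Elekes--R\'onyai type gain. You yourself flag that converting the no-concentric hypothesis into a quantitative bound that survives the dyadic sum, and handling the translation locus, are unresolved obstacles. As written this is a research plan, not a proof; nothing in the sketch forces the exponent $7/4$.

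The paper takes a different and much shorter route, in the style of Pach--de Zeeuw and Bruner--Sharir rather than Elekes--Sharir. It rationally parametrizes the circle by $\varphi(x)=\bigl(\tfrac{1-x^2}{1+x^2},\tfrac{2x}{1+x^2}\bigr)$, so that $S\times S$ becomes a point set $\Pi\subset\R^2$ of size $m^2$. For each ordered pair $(p,q)\in P^2$ with $p\neq q$, the equation $\|u-p\|=\|v-q\|$ (after substituting $u=\varphi(x),v=\varphi(y)$ and clearing denominators) defines a plane curve $Z(F_{p,q})$ of degree $\le 12$; call this family $\Gamma$, $|\Gamma|\approx n^2$. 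Then $Q\approx mn+I(\Pi,\Gamma)$. The point is that the coefficients of $F_{p,q}$ are polynomials in the four real parameters $(p_1,p_2,q_1,q_2)$, so $\Gamma$ lies in an $s$-dimensional family of plane curves with $s\le 4$. One then applies the Sharir--Zahl incidence bound for $s$-dimensional families (their Theorem~1.3) with $s=4$:
\[
I(\Pi,\Gamma)\lesssim |\Pi|^{1/2}|\Gamma|^{7/8+\e}+|\Pi|^{2/3}|\Gamma|^{2/3}+|\Pi|+|\Gamma|
= mn^{7/4+\e}+m^{4/3}n^{4/3}+m^2+n^2,
\]
which gives \emph{all four} terms simultaneously. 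The no-concentric-circle hypothesis is used in exactly one place: it guarantees $\|p\|\neq\|q\|$ for distinct $p,q\in P$, which is what the paper needs to show that distinct curves $C_{p,q}$ and $C_{p',q'}$ (hence $Z(F_{p,q})$ and $Z(F_{p',q'})$) share no irreducible component, a prerequisite for Sharir--Zahl. There is no dyadic decomposition, no $SE(2)$, and no separate argument for the leading term; the exponent $7/4$ drops out of the $s=4$ case of Sharir--Zahl.
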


For comparison with the Theorems stated above, if $|S|=m$ and $|P|=n$ then we have

\[ 
|\Delta(S,P)| \gtrsim 
\begin{cases}
m^2, & m\lesssim n^{1/4-\e} \\
mn^{1/4-\e}, & n^{1/4-\e}\lesssim m\lesssim n^{5/4-\e} \\
m^{2/3}n^{2/3}, & n^{5/4-\e}\lesssim m \lesssim n^2 \\
n^2, & n^2\lesssim m.
\end{cases}
\]

Note that the the theorem would be false without the hypothesis that no two points in $P$ lie on a circle concentric with $S$ (though two could be replaced by another fixed constant).  To see this, let $S$ be a set of $n$ points on the unit circle, evenly spaced.  Let $P=\alpha S$ for some positive real number $\alpha$.  For a fixed $p\in P$,
\[
|\Delta(\{p\},S)|
\leq |S|=n
\]
But by symmetry, $\Delta(\{p\},S)=\Delta(\{p'\},S)$ for any $p,p'\in P$.  Thus, $|\Delta(P,S)|=|\Delta(\{p\},S)|\leq n$, which would contradict the conclusion of the theorem.  However, when the points are not evenly spaced as in the example, we obtain the following result.

\begin{thm}
\label{Thm: TwoCircles}
Let $S$ and $P$ be finite sets on concentric planar circles with center $O$, and suppose $\alpha$ is such that for any $\theta\in[-\pi,\pi]$ we have

\[
|\{(p,q)\in P^2:\angle pOq=\theta\}|\lesssim |P|^{2\alpha},
\]

where $\angle pOq$ is the oriented angle between line segments $\overline{Op}$ and $\overline{Oq}$.  Then,

\[
|\Delta(S,P)|\gtrsim |S|^{1/2}|P|^{1-\alpha}.
\]
\end{thm}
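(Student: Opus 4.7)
The key observation is that two points on concentric circles have a distance determined entirely by the angle they subtend at the common center. Setting $O$ as the origin and writing $S = \{r_S e^{ia} : a \in A\}$ and $P = \{r_P e^{ib} : b \in B\}$ where $r_S, r_P$ are the two radii and $A, B \subset \R / 2\pi \Z$ are the sets of angular coordinates, the law of cosines gives
\[
|s-p|^2 = r_S^2 + r_P^2 - 2 r_S r_P \cos(a-b).
\]
Thus two pairs $(s,p), (s',p')$ determine the same distance iff $a - b \equiv \pm(a' - b') \pmod{2\pi}$. In particular, $|\Delta(S,P)| \geq \tfrac12 |A - B|$, so it suffices to prove the lower bound $|A - B| \gtrsim |S|^{1/2} |P|^{1-\alpha}$ for the difference set in the circle group. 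The hypothesis of the theorem translates cleanly into additive language: writing $r_B(\theta) = |\{(b_1,b_2) \in B^2 : b_1 - b_2 = \theta\}|$, the hypothesis says exactly that $r_B(\theta) \lesssim |P|^{2\alpha}$ uniformly in $\theta$.

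The plan is a standard Cauchy--Schwarz / additive-energy argument. First, a pivoting Cauchy--Schwarz on the representation function for $A - B$ yields
\[
|A-B| \geq \frac{|A|^2 |B|^2}{E(A,B)}, \qquad E(A,B) = \sum_\theta r_A(\theta)\, r_B(\theta),
\]
where $E(A,B)$ counts quadruples $(a_1,b_1,a_2,b_2)$ with $a_1 - a_2 = b_1 - b_2$. Next I apply Cauchy--Schwarz once more in $\theta$ to split this energy:
\[
E(A,B) \leq \Bigl(\sum_\theta r_A(\theta)^2\Bigr)^{1/2} \Bigl(\sum_\theta r_B(\theta)^2\Bigr)^{1/2} = E_2(A)^{1/2} E_2(B)^{1/2}.
\]
Then I bound the two energies: trivially $E_2(A) \leq |A| \sum_\theta r_A(\theta) = |S|^3$, while the hypothesis gives
\[
E_2(B) \leq \max_\theta r_B(\theta) \cdot \sum_\theta r_B(\theta) \lesssim |P|^{2\alpha} \cdot |P|^2 = |P|^{2 + 2\alpha}.
\]
Combining yields $E(A,B) \lesssim |S|^{3/2} |P|^{1+\alpha}$, and substituting back gives $|A-B| \gtrsim |S|^{1/2} |P|^{1-\alpha}$, which is the desired bound.

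There is no real obstacle beyond recognizing that the angular-repetition hypothesis is precisely a uniform bound on the representation function $r_B$, and hence on the additive energy of the angular set $B$. The main content of the theorem is therefore the geometric reduction from distances to angle differences; once that reduction is in place the estimate is a two-line application of Cauchy--Schwarz. The only minor care needed is the $\pm$ identification when passing from $|A-B|$ to $|\Delta(S,P)|$, which loses at most a factor of $2$ and is absorbed into $\gtrsim$.
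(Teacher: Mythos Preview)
Your proof is correct and reaches the same bound, but the additive-combinatorics step follows a different route from the paper. Both arguments begin with the same geometric reduction via the law of cosines, turning the distance count into a lower bound on the difference set $|A-B|$ of angular coordinates. From there the paper uses the hypothesis to bound $|B-B|$ from \emph{below} (since the map $(p,q)\mapsto\theta_p-\theta_q$ is $\lesssim|P|^{2\alpha}$-to-one, one gets $|B-B|\gtrsim|P|^{2-2\alpha}$) and then invokes Ruzsa's triangle inequality $|A||B-B|\le|A-B|^2$ to conclude. You instead use the hypothesis to bound the additive energy $E_2(B)$ from \emph{above}, then run Cauchy--Schwarz twice. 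Your argument is more self-contained, needing no named inequality beyond Cauchy--Schwarz; the paper's is a line shorter once Ruzsa is quoted. Numerically the two routes coincide exactly, which is no accident: the Ruzsa triangle inequality and the energy Cauchy--Schwarz are closely related tools, and here both extract the same $|S|^{1/2}|P|^{1-\alpha}$ from the same input.
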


In particular, if no two points of $P$ subtend the same angle, then we get $|S|^{1/2}|P|$ distances.  One can check this is better than the bound in Theorem \ref{Thm: MainResult} regardless of the relative size of the sets. \\

We note that Theorems \ref{Thm: MainResult} and \ref{Thm: TwoCircles} are probably far from sharp. The only case we can find where two points sets $S$ and $P$ determine $o(|S||P|)$ distances are when $S$ and $P$ lie on concentric circles. We would be extremely interested in any examples between a point set $S$ contained to a circle and $P$ not on a concentric circle which determine  $o(|S||P|)$ distances.\\

It is interesting to compare our result to the other theorems above, since each answers the same question under different assumptions on the point sets.  Of particular interest is Mathialagan's result above, which makes no assumptions whatsoever on the (finite) point sets in question.  As mentioned above, our bound is better in the range $|P|^{1/2}\lesssim |S|\lesssim |P|^3$, but outside that range the bound $\Delta(P,S)\gtrapprox (|P||S|)^{1/2}$ wins, showing our structural assumptions on our sets are only helping when the sizes of the sets are (at least somewhat) balanced. \\

It is also worth comparing our result to that of Pach and de Zeeuw, who obtain a lower bound for the number of distinct distances determined by point sets each contained on a real algebraic curve of bounded degree.  We assume one of our sets lies on a circle (a much stronger assumption then a general curve) but the other is essentially arbitrary.  Their bound is never weaker than ours, and is strictly stronger in the balanced case. \\

This paper will be structured as follows.  In Section \ref{Sec: IncidenceProblem} we provide the initial framework for the bound on $|\Delta(S,P)|$, repeating the idea of Elekes that one can use pairs of repeated distances and we discuss the incidence bound we will use, due to Sharir and Zahl \cite{SZ17}. In Sections \ref{Sec: Behavior} and \ref{Sec: Proofs} we show that the hypotheses of the Sharir-Zahl incidence bound apply to our setting.

\subsection{Acknowledgements}
The authors wish to thank Adam Sheffer for introducing us to the problem, for pointing out the references \cite{M19} and \cite{SZ17}, and encouragement. The third author wishes to thank Adam Sheffer, Josh Zahl and the participants of the MSRI summer school on the Polynomial method for many helpful discussions and MSRI, Berkeley for hosting the workshop.

\section{Creating an Incidence Problem}\label{Sec: IncidenceProblem}

\begin{dfn}
	Given any two finite sets $S,P\subset\mathbb{R}^2$, define the distance set and quadruple set of $S$ and $P$, respectively, as
	
	\[
	\Delta(S,P)=\{|u-p|:u\in S,p\in P\},
	\]
	\[
	Q(S,P)=\{(u,v,p,q)\in S^2\times P^2:|u-p|=|v-q|\}.
	\]
\end{dfn}

\begin{thm}\label{Thm: CauchySchwarzEnergyBound}
	For any sets of points $S,P\subset\mathbb{R}^2$, we have
	
	\[
	|\Delta(S,P)|\geq\frac{|S|^2|P|^2}{|Q(S,P)|}
	\]
	
\end{thm}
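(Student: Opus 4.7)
The plan is to run the standard Cauchy--Schwarz (pigeonhole on the multiplicities) argument. The idea is to partition the pairs in $S \times P$ according to the distance they realize and then apply Cauchy--Schwarz to pass from an $L^1$ identity to the $L^2$ quantity $|Q(S,P)|$.

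More concretely, for each $d \in \Delta(S,P)$, I would introduce the multiplicity
\[
m(d) := |\{(u,p) \in S \times P : |u-p| = d\}|.
\]
Since every pair $(u,p) \in S \times P$ realizes exactly one distance $d \in \Delta(S,P)$, summing over $d$ gives the $L^1$ identity
\[
\sum_{d \in \Delta(S,P)} m(d) = |S||P|.
\]
On the other hand, a quadruple $(u,v,p,q) \in Q(S,P)$ is precisely a pair of pairs $((u,p),(v,q))$ that realize a common distance, so
\[
\sum_{d \in \Delta(S,P)} m(d)^2 = |Q(S,P)|.
\]

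The final step is Cauchy--Schwarz applied to the sum $\sum_{d \in \Delta(S,P)} m(d) \cdot 1$, which yields
\[
|S|^2|P|^2 = \left( \sum_{d \in \Delta(S,P)} m(d) \right)^{\!2} \le |\Delta(S,P)| \sum_{d \in \Delta(S,P)} m(d)^2 = |\Delta(S,P)|\, |Q(S,P)|.
\]
Rearranging gives the claimed lower bound. There is no real obstacle here: the whole content is the bookkeeping identities for the first and second moments of $m(d)$, followed by a one-line Cauchy--Schwarz. The nontrivial work of the paper lies downstream, in obtaining a good upper bound on $|Q(S,P)|$ via the incidence machinery of Sharir--Zahl, which this inequality then converts into a lower bound on $|\Delta(S,P)|$.
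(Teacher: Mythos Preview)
Your proof is correct and is essentially identical to the paper's own argument: both introduce the multiplicity function $m(d)$ (the paper calls it $v(t)$), note that its first moment is $|S||P|$ and its second moment is $|Q(S,P)|$, and apply Cauchy--Schwarz. Your write-up is slightly more explicit about why $\sum_d m(d)^2 = |Q(S,P)|$, but the content is the same.
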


\begin{proof}
	The statement follows directly from the classic Cauchy-Schwarz energy bound. Let $v(t) = \{(u,p) \in S \times P : |u-p|=t\}$ be the number of occurrences of the distance $t$, then we have
	
	$$ |S|^2|P|^2 = \left(\sum_{t\in \Delta(S,P)}v(t)\right)^2 \leq |\Delta(S,P)|\sum_t v^2(t) = |\Delta(S,P)|\cdot |Q(S,P)|. $$

\end{proof}

Therefore, an upper bound on the size of $Q(S,P)$ will yield a lower bound on the size of $\Delta(S,P)$.  In order to bound $Q(S,P)$ we will follow the approach of Pach and de Zeeuw \cite{PdZ17} and Bruner and Sharir \cite{BS18}, setting up an incidence problem.  \\

We start by making a couple simple reductions.  These are not additional hypotheses, but rather can be assumed without loss of generality.

\begin{itemize}
	\item Without loss of generality we may assume the circle $S$ lives on is the unit circle centered at the origin, so all $u\in S$ satisfy $\|u\|=1$.
	\item For technical reasons that will become apparent later, we want $P$ to have the property that distinct $p,q\in P$ always satisfy $\|p\|-\|q\|\neq 2$.  This can be achieved by considering half open annuli of width $2$, and assigning the annuli alternating colors.  Since one of the colors must contain at least half of $P$, we may assume $P$ has the desired property at the cost of a constant.
	\item We will assume $(-1,0)\notin S$ and $(0,0)\notin P$.
\end{itemize}

Note that for any point $p\in P$ and distance $t$, there are at most two choices of $u\in S$ for which $\|p-u\|=t$, since the circle centered at $p$ of radius $t$ can only intersect the unit circle twice. 
Therefore, the number of quadruples $(u,v,p,q)\in Q(S,P)$ with $p=q$ is $\approx |S||P|$.
It remains to bound our modified quadruple set

\[
\widetilde{Q}(S,P)=\{(u,v,p,q)\in S^2\times P^2:p\neq q,\|p-u\|=\|q-v\|\}.
\]

Let 

\[
f_{p,q}(u_1,u_2,v_1,v_2)=(p_1-u_1)^2+(p_2-u_2)^2-(q_1-v_1)^2-(q_2-v_2)^2 \label{Eqn: fpq}.
\]

It follows that for $u,v\in S$ and $p,q\in P$, we have $\|u-p\|=\|v-q\|$ if and only if $f_{p,q}(u,v)=0$.  This gives us an incidence problem between a set of points and hypersurfaces in $\R^4$, but we want to apply a point-curve incidence bound in $\R^2$.  To do this, we use the fact that $S$ is contained in the unit circle, which admits a rational parametrization.  Let

\[
\f(x)=\left(\frac{1-x^2}{1+x^2},\frac{2x}{1+x^2}\right).
\]

Then $\f$ is a homeomorphism from $\R$ to the unit circle with the point $(-1,0)$ removed.  Recall $f_{p,q}$ is a quadratic in four variables; this means

\[
F_{p,q}(x,y):=(1+x^2)^2(1+y^2)^2f_{p,q}(\f(x),\f(y))
\]

is a two variable polynomial of degree $\leq 12$.  Moreover, $F_{p,q}(x,y)=0$ if and only if $f_{p,q}(\f(x),\f(y))=0$, which in turn happens if and only if $(x,y)$ parametrizes a point $(u,v)\in S^1\times S^1$ with $\|u-p\|=\|v-q\|$.  If $\Pi=\{(x,y):\f(x),\f(y)\in S\}$ and $\Gamma=\{Z(F_{p,q}):p,q\in P,p\neq q\}$, then our observations can be summarized in the following lemma.

\begin{lem} \label{Lem: IncidenceProblemReduction}
	If $S,P,\Pi,\Gamma$ are as above, then $|\Pi|=|S|^2,|\Gamma|\approx |P|^2$, and
	
	\[
	Q(S,P)\approx |S||P|+I(\Pi,\Gamma).
	\]
\end{lem}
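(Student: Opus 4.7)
The plan is to establish the three assertions in sequence. The main idea is to partition $Q(S,P)$ according to whether $p=q$, handle the diagonal part directly via circle-circle intersections, and recast the off-diagonal part as a point-curve incidence problem in $\R^2$ using the rational parametrization $\varphi$.

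The equality $|\Pi|=|S|^2$ is essentially definitional: $\varphi$ is a bijection from $\R$ onto $S^1\setminus\{(-1,0)\}$, and the reduction $(-1,0)\notin S$ ensures every $u\in S$ has a unique preimage, so $(x,y)\mapsto(\varphi(x),\varphi(y))$ gives a bijection $\Pi\to S\times S$. For the decomposition of $|Q(S,P)|$, I first observe that for any fixed $p\in P$ and any $t\geq 0$, the set $\{u\in S:\|u-p\|=t\}$ has at most two elements (as the intersection of two circles), so the number of quadruples $(u,v,p,p)\in Q(S,P)$ is bounded by $\sum_{p\in P}\sum_{u\in S}|\{v\in S:\|v-p\|=\|u-p\|\}|\lesssim |S||P|$. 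For the off-diagonal contribution $|\widetilde{Q}(S,P)|$, the relation $\|u-p\|=\|v-q\|$ is equivalent by definition to $f_{p,q}(u,v)=0$; after substituting $u=\varphi(x),v=\varphi(y)$ and clearing the strictly positive factor $(1+x^2)^2(1+y^2)^2$, this becomes $F_{p,q}(x,y)=0$, i.e., the point $(x,y)\in\Pi$ is incident to the curve $Z(F_{p,q})\in\Gamma$. Summing over ordered pairs $p\neq q$ then identifies $|\widetilde{Q}(S,P)|$ with $I(\Pi,\Gamma)$.

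Finally, for $|\Gamma|\approx|P|^2$, I will index $\Gamma$ by ordered pairs $(p,q)$ with $p\neq q$, which gives the trivial upper bound $|\Gamma|\leq|P|(|P|-1)\lesssim|P|^2$. To see that this overcounts by at most a bounded factor, I show that $(p,q)\mapsto F_{p,q}$ is injective on such pairs: if $F_{p,q}=F_{p',q'}$, then dividing by the positive factor yields $f_{p,q}(\varphi(x),\varphi(y))=f_{p',q'}(\varphi(x),\varphi(y))$ for all $x,y$; expanding the difference shows that the affine function $-2(p-p')\cdot u+2(q-q')\cdot v$ must be constant on $\varphi(\R)\times\varphi(\R)$, and since $\varphi(\R)$ spans $\R^2$, this forces $p=p'$ and $q=q'$. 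The step I expect to be the main obstacle is passing from this polynomial injectivity to near-injectivity of the map $(p,q)\mapsto Z(F_{p,q})$, since two distinct polynomials can in principle define the same real variety. However, because the $F_{p,q}$ all have bounded bidegree, any coincidence $Z(F_{p,q})=Z(F_{p',q'})$ forces a shared irreducible component, which together with the injectivity above produces at most a bounded number of repetitions per curve; this bounded overcounting is absorbed into the $\approx$ in the statement.
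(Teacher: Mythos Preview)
Your treatment of $|\Pi|=|S|^2$, the diagonal contribution $\approx|S||P|$, and the translation of $\widetilde{Q}(S,P)$ into incidences via $\varphi$ is exactly what the paper does (the paper simply states the lemma as a summary of the preceding setup, so you are in fact filling in more detail than the paper provides).

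There is, however, a gap in your last paragraph. You correctly prove that $(p,q)\mapsto F_{p,q}$ is injective as a map to polynomials, and you correctly flag that this does not immediately give injectivity of $(p,q)\mapsto Z(F_{p,q})$ over $\R$. But the sentence ``bounded bidegree \dots\ together with the injectivity above produces at most a bounded number of repetitions per curve'' does not follow: there is no a priori bound on how many pairwise non-proportional real polynomials of degree $\le 12$ can share a common real zero set, so polynomial injectivity plus a degree bound is not enough. The clean fix is the one the paper supplies later (and tacitly relies on here): by Lemma~\ref{Lem: CurveIntersection}, for distinct $(p,q),(p',q')$ with $\|p\|\neq\|q\|$ one has $|C_{p,q}\cap C_{p',q'}|\le 4$, while by Lemma~\ref{Lem: DimensionOne} each $C_{p,q}$ is infinite. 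Hence $C_{p,q}\neq C_{p',q'}$, and since $\varphi\times\varphi$ is a bijection onto $C_{p,q}$ minus finitely many points, $Z(F_{p,q})\neq Z(F_{p',q'})$. This gives exact injectivity of $(p,q)\mapsto Z(F_{p,q})$ (not just bounded-to-one), so $|\Gamma|=|P|(|P|-1)$ and $|\widetilde{Q}(S,P)|=I(\Pi,\Gamma)$ on the nose.
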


So, matters have been reduced to obtaining an incidence bound that applies to our sets $\Pi$ and $\Gamma$. \\

The incidence bound we will use is due to Sharir and Zahl \cite{SZ17}. To use the result of Sharir and Zahl we will introduce their terminology.  We identify polynomials of degree $\leq D$ with elements of $\R^{{\tiny\binom{D+2}{2}}}$, since each polynomial can be viewed as the vector of its coefficients.  Since $Z(f)=Z(\lambda f)$ for nonzero scalars $\lambda$, we can identify algebraic plane curves of degree $\leq D$ as elements of $\PRD$.  With this framework, Sharir and Zahl make the following definition.

\begin{dfn}
	An $s$ dimenional family of plane curves of degree at most $D$ is an algebraic variety $\mathcal{F}\subset\PRD$ that has dimension $s$. We call the degree of $\mathcal{F}$ the complexity of the family.
\end{dfn}

\begin{thm}[\cite{SZ17}, Theorem 1.3]\label{Thm: SharirZahl}

Let $\mathcal{C}$ be a set of algebraic plane curves belonging to an $s$-dimensional family of curves of bounded degree, no two of which share a common irreducible component.  Let $\mathcal{P}$ be a finite set of points in $\R^2$.  For any $\e>0$, we have

\[
I(\mathcal{P},\mathcal{C})\lesssim |\mathcal{P}|^{\frac{2s}{5s-4}}|\mathcal{C}|^{\frac{5s-6}{5s-4}+\varepsilon}+|\mathcal{P}|^{\frac{2}{3}}|\mathcal{C}|^{\frac{2}{3}}+|\mathcal{P}|+|\mathcal{C}|,
\]

where the constant depends on $s$, $\varepsilon$, $D$ and the complexity of the family that $\mathcal{C}$ is selected from.
	
\end{thm}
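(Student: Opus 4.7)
The plan is to follow the polynomial partitioning approach of Guth--Katz, as refined for families of curves. Since Theorem \ref{Thm: SharirZahl} is the output of a fairly involved program, I will only describe the skeleton. Fix $\e>0$ and pick a partitioning polynomial $g\in\R[x,y]$ of degree $r$, to be optimized at the end, whose zero set $Z(g)$ decomposes $\R^2\setminus Z(g)$ into $O(r^2)$ open cells each containing at most $O(|\mathcal{P}|/r^2)$ points of $\mathcal{P}$. Existence of such a $g$ follows from the standard polynomial ham-sandwich argument. I would then split the incidences into three groups: (a) those occurring strictly inside open cells, (b) those whose point lies on $Z(g)$ but whose curve is not contained in $Z(g)$, and (c) those whose curve is fully contained in $Z(g)$.

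For group (a), a curve of degree $\leq D$ meeting a cell either lies in $Z(g)$ or crosses $Z(g)$ transversely, and in the latter case B\'ezout bounds the number of cells it visits by $O(rD)$. So within each cell I would invoke the induction hypothesis (on $|\mathcal{P}|$ and $|\mathcal{C}|$) to get an incidence bound, then sum over cells. For group (b), each curve not contained in $Z(g)$ meets $Z(g)$ in at most $O(rD)$ points by B\'ezout, contributing $O(rD|\mathcal{C}|)$ total incidences. The delicate step is group (c), and this is where the $s$-dimensional family hypothesis enters: the locus
\[
\mathcal{F}_g=\{C\in\mathcal{F}:C\subset Z(g)\}
\]
is cut out of $\mathcal{F}$ by polynomial conditions (coming from the requirement that each coefficient of the remainder on division by $g$ vanishes), so for generic $g$ it is a proper subvariety of $\mathcal{F}$, and hence has dimension at most $s-1$. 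One can therefore recurse on the family $\mathcal{F}_g$, peeling off one dimension at a time until the family is zero-dimensional, where the Pach--Sharir bound (or the Szemer\'edi--Trotter bound for $s=2$) suffices as a base case.

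Putting these pieces together, I would set up an induction on $s$, with the two-variable quantity $I(\mathcal{P},\mathcal{C})$ estimated in terms of the corresponding quantity for $(s-1)$-dimensional families, and inside each cell the same $s$-dimensional bound but with fewer points and curves. Choosing $r$ to balance the cell contribution against group (c) yields the leading term $|\mathcal{P}|^{2s/(5s-4)}|\mathcal{C}|^{(5s-6)/(5s-4)+\e}$, with the $+\e$ arising from the logarithmic losses accumulated through the inductive recursion on $s$. The Szemer\'edi--Trotter--type terms $|\mathcal{P}|^{2/3}|\mathcal{C}|^{2/3}+|\mathcal{P}|+|\mathcal{C}|$ appear as the base-case contribution and from the trivial bound when many curves share a point.

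The main obstacle, and the reason this theorem is nontrivial, is the algebraic control of $\mathcal{F}_g$: one must choose the partitioning polynomial so that $\dim\mathcal{F}_g<s$ while still getting the cell decomposition, and then track how the complexity of the induced family depends on $r$, $D$, and the complexity of $\mathcal{F}$. This requires quantitative algebraic-geometry inputs (effective bounds on dimensions of constructible sets and on their degrees) that are the technical heart of \cite{SZ17}, and which I would cite rather than reprove.
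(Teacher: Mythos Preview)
The paper does not prove Theorem~\ref{Thm: SharirZahl} at all: it is quoted verbatim from \cite{SZ17} and used as a black box. There is therefore nothing in the paper to compare your sketch against; the authors simply cite the result and then, in Section~\ref{Sec: Proofs}, verify that their particular family $\Gamma$ satisfies its hypotheses.

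As a side remark, your sketch is a reasonable outline of one route to incidence bounds of this type (polynomial partitioning plus induction on the family dimension $s$), but it is not the argument Sharir and Zahl actually give. Their proof proceeds by first showing that $n$ algebraic curves from an $s$-dimensional family can be cut into $O(n^{3/2-1/(4s-2)+\e})$ pseudo-segments (this is the technical core, controlling the number of ``lenses''), and then feeding that cutting number into an existing incidence bound for points and pseudo-segments. The $s$-dependence in the exponent comes from the pseudo-segment cutting bound, not from an inductive descent on $\dim\mathcal{F}$. So if you intend to reproduce the Sharir--Zahl argument rather than an alternative one, the skeleton you wrote would need to be replaced.
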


In Section \ref{Sec: Proofs}, we will prove that $\Gamma$ lives in a $4$-dimensional family of algebraic plane curves, and that no two curves of $\Gamma$ have a common component.  In order to prove this, we will first work with related curves in $\R^4$ before applying the rational parametrization to obtain curves in the plane.  Recall that the polynomial $f_{p,q}$ defines an algebraic hypersurface in $\R^4$.  We define $C_{p,q}=Z(f_{p,q})\cap (S^1\times S^1)$.  Since $S^1\times S^1$ is a real algebraic variety of dimension $2$ and $f_{p,q}$ cannot vanish on all of $S^1\times S^1$ (unless $p=q=0$), it follows that $C_{p,q}$ is a real algebraic curve in $\R^4$.  We will study these curves in Section \ref{Sec: Behavior}.

\section{Behavior of the curves $C_{p,q}$ in $\R^4$} \label{Sec: Behavior}

In this section, we will prove some results about our curves $C_{p,q}\subset\R^4$.  These properties will be used in section \ref{Sec: Proofs} to prove that no two curves in $\Gamma$ share a common component.

\begin{lem}\label{Lem: DimensionOne}
For any $p\neq q$, the curve $C_{p,q}$ does not contain any isolated points (in the Euclidean topology).
\end{lem}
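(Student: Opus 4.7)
The plan is to pass to the angular parametrization $\Phi(\alpha,\beta)=((\cos\alpha,\sin\alpha),(\cos\beta,\sin\beta))$ of $S^1\times S^1\subset \R^4$, which identifies $C_{p,q}$ with the zero set of the real-analytic function
\[
h(\alpha,\beta) := f_{p,q}(\Phi(\alpha,\beta)) = r_p\cos(\alpha-\theta_p)-r_q\cos(\beta-\theta_q)-\tfrac{r_p^{2}-r_q^{2}}{2}
\]
on the $2$-torus, where $(r_p,\theta_p)$ and $(r_q,\theta_q)$ are polar coordinates for $p$ and $q$ (so $r_p,r_q>0$ since $0\notin P$). Isolated points of $C_{p,q}$ in $\R^4$ correspond bijectively to isolated zeros of $h$, so it suffices to rule these out.

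Next I would reduce to critical points. Wherever $\nabla h\neq 0$, the implicit function theorem makes $h^{-1}(0)$ a smooth $1$-manifold locally, so no isolated zero can occur there. Since $\partial_\alpha h=-r_p\sin(\alpha-\theta_p)$ and $\partial_\beta h=r_q\sin(\beta-\theta_q)$, the critical points are exactly the four pairs with $(u,v)=(\pm p/r_p,\pm q/r_q)$, and at each the Hessian is the diagonal matrix with entries $\mp r_p,\pm r_q$. Two of the four are thus non-degenerate saddles and the other two are non-degenerate local extrema.

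I would then plug in and factor the critical values. The two extremum values come out to $\pm(r_p+r_q)(2\mp(r_p-r_q))/2$, which vanish only when $|r_p-r_q|=2$---precisely the configuration ruled out by the reduction on $P$ in Section~\ref{Sec: IncidenceProblem}. So no local extremum of $h$ lies on its zero set, and the only remaining candidates for isolated zeros are the saddles. However, at a non-degenerate saddle with $h=0$ the Morse lemma rewrites $h$ as $x^2-y^2$ in smooth local coordinates; the zero set is then the union of two transverse smooth branches through the critical point, which is certainly not isolated.

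The only step requiring genuine work is the last one: the algebraic verification that the reductions on $P$ (namely $0\notin P$ and $\|p\|-\|q\|\neq \pm 2$) precisely forbid the two extremum configurations, together with the appeal to the Morse lemma to dispose of the saddles. Everything else is mechanical bookkeeping of critical points on the torus, and I do not anticipate any serious obstacle beyond these two ingredients.
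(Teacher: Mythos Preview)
Your argument is correct and complete (modulo a harmless sign/scale slip: a direct computation gives $f_{p,q}(\Phi(\alpha,\beta))=r_p^{2}-r_q^{2}-2r_p\cos(\alpha-\theta_p)+2r_q\cos(\beta-\theta_q)$, so your $h$ is really $-\tfrac12 f_{p,q}\circ\Phi$; the zero set and the Morse analysis are unaffected). The identification of the four critical points, the computation that the two definite ones have critical values $\pm\tfrac12(r_p+r_q)(2\mp(r_p-r_q))$, and the Morse--lemma disposal of the saddles are all fine, and together they exhaust the possibilities.

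The paper, however, argues more elementarily and avoids Morse theory entirely. Given $(u,v)\in C_{p,q}$ it observes that the map $u'\mapsto\|u'-p\|$ takes $S^1$ onto the interval $[\|p\|-1,\|p\|+1]$ (and likewise for $v'$), and then builds an explicit one--parameter family $(u_t,v_t)\in C_{p,q}$ by sliding both distances by the same amount $t$. This perturbation can only be obstructed in both directions simultaneously when $\|u-p\|$ sits at one endpoint of its range and $\|v-q\|$ at the opposite endpoint of its range; equating the common distance then forces $|\,\|p\|-\|q\|\,|=2$, which is exactly the excluded case. Your Morse--theoretic route is more systematic and makes the role of the reduction $|\,\|p\|-\|q\|\,|\neq 2$ transparently algebraic (it is precisely the vanishing condition for the extremum values), while the paper's geometric perturbation is shorter and needs nothing beyond continuity of the distance function on the circle. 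Both are valid; they simply trade explicit Morse machinery for an ad hoc continuous deformation.
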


\begin{proof}
Consider some $(u,v)\in C_{p,q}$.  Since $u,v\in S^1$, $||p||-1\leq ||u-p||\leq ||p||+1$ and similarly $||q||-1\leq ||v-q||\leq ||q||+1$.  Moreover, these inequalities are all strict since otherwise one of $\|p\|=\|q\|$ or $\|p\|=\|q\|+2$ must hold.  It follows that for some $\e>0$ and for any $t\in (-\e,\e)$ there must be points $u_t,v_t\in S^1$ with $||u_t-p||=||u-p||+t$ and $||v_t-q||=||v-q||+t$, hence $(u_t,v_t)\in C_{p,q}$.  We may also require that $u,u_t$ are on the same side of the circle in the sense that $S^1$ is divided into two semi-circles by the line through $p$ and the origin.  Making a similar restriction for $v_t$, this ensures that $t\mapsto (u_t,v_t)$ is continuous, hence $(u,v)\in C_{p,q}$ is not an isolated point.

\end{proof}

\begin{lem}\label{Lem: 2flatIntersectionBound}
For any $p,q\in \R^2$ with $\|p\|\neq\|q\|$ and any $2$-flat $K$, we have $|C_{p,q}\cap K|\leq 4$.
\end{lem}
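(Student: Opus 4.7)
The plan is to exploit the fact that on the torus $S^1 \times S^1$, the quadric $f_{p,q}$ collapses to an affine-linear function. Indeed, using $\norm{u}^2 = \norm{v}^2 = 1$ on $S^1 \times S^1$, one obtains $f_{p,q}(u,v) = \norm{p}^2 - \norm{q}^2 - 2p\cdot u + 2q\cdot v$, so $C_{p,q} = H \cap (S^1 \times S^1)$ where $H \subset \R^4$ is the hyperplane $\{(u,v) : 2p\cdot u - 2q\cdot v = \norm{p}^2 - \norm{q}^2\}$. This is a genuine hyperplane since the paper's standing reduction $(0,0) \notin P$ ensures $p,q \neq 0$. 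Consequently, $C_{p,q}\cap K = (S^1 \times S^1) \cap (K \cap H)$, and the proof splits into three cases based on whether $K \cap H$ is empty, a $1$-flat, or all of $K$.

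The empty case is trivial, and when $K \cap H$ is a line $L$, I would parametrize $L = \{(u_0 + t\vec{a},\, v_0 + t\vec{b}) : t \in \R\}$ with $(\vec{a}, \vec{b}) \neq 0$, and observe that at least one of $\norm{u_0 + t\vec{a}}^2 = 1$ and $\norm{v_0 + t\vec{b}}^2 = 1$ is a nondegenerate real quadratic in $t$, giving $|L \cap (S^1 \times S^1)| \leq 2$. The substantive case is $K \subset H$. Here I would parametrize $K$ affinely as $(s,t) \mapsto (u_0 + sa + tc,\, v_0 + sb + td)$ and reduce $C_{p,q}\cap K$ to the common zero set, in the $(s,t)$-plane, of the two conics $\phi(s,t) := \norm{u_0 + sa + tc}^2 - 1$ and $\psi(s,t) := \norm{v_0 + sb + td}^2 - 1$. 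Bezout's theorem then bounds this by $2\cdot 2 = 4$, provided $\phi$ and $\psi$ share no common irreducible component over $\C$.

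To rule out such a common component, I would split on the ranks of the affine maps $(s,t)\mapsto u_0 + sa + tc$ and $(s,t)\mapsto v_0 + sb + td$. A rank-$0$ case makes one of $\phi, \psi$ constant; combined with $K\subset H$, it forces either $p=0$ or $q=0$, contradicting the reductions. If both maps have rank $2$, then $\phi$ and $\psi$ are affine-isomorphic pullbacks of the smooth conic $w_1^2+w_2^2-1$, hence irreducible over $\C$, so any common component forces $\phi = \lambda \psi$; matching quadratic parts yields $a = \sqrt{\lambda}\,Ob$ and $c = \sqrt{\lambda}\,Od$ for some orthogonal $O$; matching linear parts (using invertibility of the matrix with columns $b,d$) gives $u_0 = \sqrt{\lambda}\,Ov_0$; matching constants forces $\lambda = 1$. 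Hence $u(s,t) = Ov(s,t)$ on all of $K$, and the hyperplane equation becomes $2(q - O^T p) \cdot v = \norm{p}^2 - \norm{q}^2$ for all $v$ in a full affine plane, which forces $O^T p = q$ and thus the contradiction $\norm{p} = \norm{q}$. If one or both maps have rank $1$, write $a = \alpha e$, $c = \gamma e$, $b = \beta f$, $d = \delta f$ for unit vectors $e, f$ and scalars $\alpha, \gamma, \beta, \delta$; then $\phi$ and $\psi$ each factor over $\C$ into linear forms in $\ell_u(s,t) := \alpha s + \gamma t$ and $\ell_v(s,t) := \beta s + \delta t$ respectively, and the linear independence of $(a,b)$ and $(c,d)$ in $\R^4$ forces $\alpha\delta - \beta\gamma \neq 0$, so $\ell_u$ and $\ell_v$ are non-proportional and $\phi, \psi$ share no common linear factor.

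The main obstacle, and the place where the hypothesis $\norm{p} \neq \norm{q}$ really enters, is the rank-$2$/rank-$2$ subcase of the $K\subset H$ analysis: extracting an orthogonal change of variables from the proportionality $\phi = \lambda\psi$ and then exploiting the defining equation of $H$ to conclude $\norm{p} = \norm{q}$. The other subcases are rigid enough that the linear independence of the two direction vectors of $K$ in $\R^4$ does almost all the work.
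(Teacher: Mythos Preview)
Your proof is correct and takes a genuinely different route from the paper's.

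\textbf{What the paper does.} The paper never isolates the hyperplane $H$ at this stage. Instead it picks three points $(u_i,v_i)\in C_{p,q}\cap K$, uses them to coordinatize $K$, and obtains the two conics $\|x(u_1-u_3)+y(u_2-u_3)+u_3\|^2=1$ and the analogous one in the $v_i$'s. The heart of the paper's argument is a \emph{similar-triangles} computation: if the two conics were proportional, comparison of their quadratic parts forces $\triangle u_1u_2u_3\sim\triangle v_1v_2v_3$, and then an explicit trigonometric calculation on the circumscribed unit circle shows the similarity ratio must be $1$, whence $\triangle u_1u_3p\cong\triangle v_1v_3q$ and $\|p\|=\|q\|$. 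Irreducibility of each conic is argued separately by factoring and ruling out $\cos\theta=\pm1$.

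\textbf{What you do differently.} You front-load the observation $C_{p,q}=H\cap(S^1\times S^1)$ (which the paper only introduces in the \emph{next} lemma) and split on $\dim(K\cap H)$. In the substantive case $K\subset H$ your rank analysis replaces the paper's geometric argument: proportionality of the two conics yields an orthogonal $O$ with $u(s,t)=Ov(s,t)$ on all of $K$, and then the defining equation of $H$ forces $O^Tp=q$, hence $\|p\|=\|q\|$. This is cleaner linear algebra in place of the paper's trigonometry, and it makes the role of $H$ (and of the hypothesis $\|p\|\neq\|q\|$) more transparent.

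\textbf{One minor gap in your sketch.} Your rank-$1$ paragraph is written as if \emph{both} maps have rank $1$ (you set $a=\alpha e,\,c=\gamma e$ \emph{and} $b=\beta f,\,d=\delta f$ simultaneously). The mixed case (one rank $1$, one rank $2$) is not covered by that computation, but it is immediate: the rank-$2$ conic is an affine pullback of $w_1^2+w_2^2=1$ and hence irreducible of degree $2$, so it cannot share a component with a product of linear forms. You should say this explicitly. Also, your appeal to ``the paper's standing reduction $(0,0)\notin P$'' in the rank-$0$ case is really an appeal to $p\neq0$ and $q\neq0$; as stated the lemma allows one of them to be $0$, in which case $C_{p,q}\cap K$ can genuinely be infinite (e.g.\ $q=0$, $K=\{u_0\}\times\R^2$). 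The paper's own proof has the same blind spot, and the application only needs $p,q\in P$, so this is a wording issue rather than a mathematical one.
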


\begin{proof}
Let $(u_1,v_1),(u_2,v_2),(u_3,v_3)\in C_{p,q}\cap K$.  It follows that $K-(u_3,v_3)$ is the plane spanned by $\{(u_1,v_1)-(u_3,v_3),(u_2,v_2)-(u_3,v_3)\}$.  If $(u,v)\in C_{p,q}\cap K$ then $(u,v)-(u_3,v_3)$ is in that plane as well, and therefore

\[
(u,v)-(u_3,v_3)=x((u_1,v_2)-(u_3,v_3))+y((u_2,v_2)-(u_3,v_3))
\]

for some unique values of $x$ and $y$.  Equivalently, we have

\begin{align*}
u=&x(u_1-u_3)+y(u_2-u_3)+u_3 \\
v=&x(v_1-v_3)+y(v_2-v_3)+v_3
\end{align*}

Since $\|u\|=\|v\|=1$, this means $x$ and $y$ satisfy

\begin{align*}
\|x(u_1-u_3)+y(u_2-u_3)+u_3\|&=1 \\
\|x(v_1-v_3)+y(v_2-v_3)+v_3\|&=1.
\end{align*}

This system has solutions $(x,y)=(1,0),(0,1),(0,0)$ corresponding to $(u_1,v_1),(u_2,v_2),(u_3,v_3)$, respectively.  To prove the lemma, it therefore suffices to show this system has $\leq 4$ solutions.  For now we assume that both equations are irreducible quadratics, dealing with the case that these are lines or the product of lines later.  If both equations are irreducible then by Bezout it suffices to prove that one is not a constant multiple of the other.  Expanding each equation and focusing on the quadratic terms while ignoring the lower order terms, we have

\begin{align*}
\|u_1-u_3\|^2x^2+\|u_2-u_3\|^2y^2+2\left\langle u_1-u_3,u_2-u_3\right\rangle xy+\cdots &=0\\
\|v_1-v_3\|^2x^2+\|v_2-v_3\|^2y^2+2\left\langle v_1-v_3,v_2-v_3\right\rangle xy+\cdots &=0.
\end{align*}

Suppose for contradiction that one equation is a constant multiple of the other.  This means we can normalize both equations so that the coefficient of $x^2$ is 1, and all other coefficients must be the same.  In particular, this means we have $\frac{\|u_2-u_3\|}{\|u_1-u_3\|}=\frac{\|v_2-v_3\|}{\|v_1-v_3\|}$; denote this common value by $t$.  Let $A=\|u_1-u_3\|,B=\|v_1-v_3\|$, define $\theta_u$ to be the angle between $u_1-u_3$ and $u_2-u_3$, and define $\theta_v$ similarly.  Then our equations are

\begin{align*}
A^2x^2+t^2A^2y^2+(2tA^2\cos\theta_u) xy+\cdots &=0\\
B^2x^2+t^2B^2y^2+(2tB^2\cos\theta_v) xy+\cdots &=0
\end{align*}

or, normalizing so that the $x^2$ coefficient is 1,

\begin{align*}
x^2+t^2y^2+(2t\cos\theta_u) xy+\cdots &=0\\
x^2+t^2y^2+(2t\cos\theta_v) xy+\cdots &=0.
\end{align*}

Comparing the $xy$ coefficients, we conclude that $\theta_u=\pm\theta_v$.  This means that the two triangles $\Delta u_1u_2u_3$ and $\Delta v_1v_2v_3$ are similar; they have common angle $\theta$ at the third vertex and side lengths of the form $\ell,t\ell$ from the third vertex to the first and second, respectively, for some value of $\ell$ ($\ell=A$ in the first triangle, and $\ell=B$ in the second).  We claim there is only one value of $\ell$ for which such a triangle has all its vertices on the unit circle.  This in turn implies $\|p\|=\|q\|$, as the triangles $\Delta u_1u_3p$ and $\Delta v_1v_3q$ would be congruent, and this is our contradiction.  So, it suffices to prove the claim.  Let $O$ denote the origin, and let $\Delta \alpha\beta\gamma$ be any triangle with angle $\theta$ at $\alpha$ and side lengths $\alpha\beta=t\ell,\alpha\gamma=\ell$  (see figure 1).

\begin{figure}
\centering
\begin{tikzpicture}
\draw (0,0)--(-4.5,2.18)--(-1.5,4.77)--(3,4)--(0,0);
\draw (0,0)--(-1.5,4.77);
\draw[ultra thick] (-4.5,2.18)--(3,4);
\node[below] at (0,0) {$O$};
\node[above] at (-4.5,2.18) {$\gamma$};
\node[above] at (-1.5,4.77) {$\beta$};
\node[above] at (3,4) {$\alpha$};
\node[above] at (-.75,3.09) {\textbf{$\ell$}};
\node[above] at (.75,4.38) {$t\ell$};
\node[left] at (1.8,3.95) {$\theta$};

\draw (1.875, 4.1925)
 arc [radius=1, start angle=170.29, end angle= 196.21];
\end{tikzpicture}
\caption{}
\end{figure}

If $\alpha,\beta,\gamma$ are on the unit circle then the triangle $\Delta O\alpha\gamma$ is an isosceles triangle with common side length $1$ and base length $\ell$, hence it has common base angle $\angle O\alpha\gamma=\arccos \ell/2$.  This implies $\angle O\alpha\beta=\theta+\arccos\ell/2$.  Similarly, $\Delta O\alpha\beta$ is an isosceles triangle with common side length $1$ and base length $t\ell$, so the common base angle is $\angle O\alpha\beta=\arccos t\ell/2$.  This means $\ell$ must satisfy

\begin{align*}
\theta+\arccos \frac{\ell}{2}&=\arccos \frac{t\ell}{2} \\
\frac{\ell}{2}\cos\theta-\left(1+\frac{\ell^2}{4}\right)^{1/2}\sin\theta&=\frac{t\ell}{2} \\
\left(\frac{\cos\theta-t}{2}\right)^2\ell^2&=\sin^2\theta\left(1+\frac{\ell^2}{4}\right) \\
\left(\left(\frac{\cos\theta-t}{2}\right)^2-\frac{\sin^2\theta}{4}\right)\ell^2&=\sin^2\theta.
\end{align*}

Note the right hand side cannot be zero, since that would imply three points on a circle were also on a line.  So, there is at most one positive solution for $\ell$. \\

We now show that the equations are irreducible quadratics.  Recall the first few terms are

\[
x^2+t^2y^2+(2t\cos\theta) xy+\cdots =0
\]

If this is reducible, we can write it as a product $(x+ay)(x+by+c)$.  Expanding, we must have

$$ ab = t^2 \text{~~~and~~~} a+b=2t\cos(\theta).$$

Plugging the first of these into the second gives the quadratic

$$  \frac{1}{a}t^2 - 2\cos(\theta)t + a = 0,$$

which has a real solution only when $\cos(\theta)=\pm 1$. As noted above, this would mean that we have three points of a circle on a line, a contradiction. So we are not in the case where are quadratics are reducible and thus we are done by the above.

\end{proof}

\begin{lem}
\label{Lem: CurveIntersection}
If $(p,q),(p',q')\in \R^4$ are distinct and $\|p\|\neq\|q\|$, then $|C_{p,q}\cap C_{p',q'}|\leq 4$.
\end{lem}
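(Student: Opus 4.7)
The key observation I would start with is that although $f_{p,q}$ is a quadratic polynomial in four variables on all of $\mathbb{R}^4$, its restriction to $S^1 \times S^1$ is actually affine. Expanding gives
\[
f_{p,q}(u,v) = \|p\|^2 - 2p\cdot u + \|u\|^2 - \|q\|^2 + 2q\cdot v - \|v\|^2,
\]
and on $S^1\times S^1$ the terms $\|u\|^2$ and $\|v\|^2$ both equal $1$ and cancel. Thus the restriction of $f_{p,q}$ to $S^1\times S^1$ agrees with the affine function
\[
\ell_{p,q}(u,v) = -2p\cdot u + 2q\cdot v + (\|p\|^2 - \|q\|^2),
\]
which cuts out an affine hyperplane $H_{p,q}\subset\mathbb{R}^4$. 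In particular, $C_{p,q} = H_{p,q}\cap(S^1\times S^1)$, and so
\[
C_{p,q}\cap C_{p',q'} \subset H_{p,q}\cap H_{p',q'} \cap (S^1\times S^1).
\]

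The plan is then to show that $H_{p,q}$ and $H_{p',q'}$ are distinct hyperplanes, so that their intersection is a $2$-flat $K$, after which Lemma \ref{Lem: 2flatIntersectionBound} (whose hypothesis $\|p\|\neq\|q\|$ is exactly what we assume here) yields $|C_{p,q}\cap K|\leq 4$ and hence the bound. To show $H_{p,q}\neq H_{p',q'}$, suppose the two defining affine equations are proportional, so there exists $\lambda\neq 0$ with $p'=\lambda p$, $q'=\lambda q$, and $\|p'\|^2-\|q'\|^2 = \lambda(\|p\|^2-\|q\|^2)$. Substituting the first two into the third gives $\lambda^2(\|p\|^2-\|q\|^2) = \lambda(\|p\|^2-\|q\|^2)$, and since $\|p\|\neq\|q\|$ we conclude $\lambda = 1$, forcing $(p',q')=(p,q)$ and contradicting distinctness.

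The main obstacle to anticipate is really just the initial realization that the quadric $f_{p,q}$ restricted to the torus $S^1\times S^1$ collapses to an affine function; once that is in hand, the rest of the argument is essentially Bezout-free and reduces cleanly to the already-proved $2$-flat intersection lemma. I would not expect any delicate casework here, since the hypothesis $\|p\|\neq\|q\|$ serves the dual role of making $\lambda=1$ the only proportionality constant and of supplying the hypothesis needed to invoke Lemma \ref{Lem: 2flatIntersectionBound}.
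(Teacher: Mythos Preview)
Your approach is essentially identical to the paper's: both linearize $f_{p,q}$ on $S^1\times S^1$ to obtain an affine hyperplane, use $\|p\|\neq\|q\|$ to show the two hyperplanes cannot coincide, and then invoke Lemma~\ref{Lem: 2flatIntersectionBound}. The only minor imprecision is your claim that two distinct hyperplanes must meet in a $2$-flat---they could be parallel (the paper treats this as a separate case)---but then the intersection is empty and the bound holds trivially, so this does not affect correctness.
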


\begin{proof}
The curves $C_{p,q}$ are defined by the equations

\begin{align}
u_1^2+u_2^2&=1 \\
v_1^2+v_2^2&=1 \\
(u_1-p_1)^2+(u_2-p_2)^2&=(v_1-q_1)^2-(v_2-q_2)^2.
\end{align}

Expanding equation (3) and substituting equations (1) and (2), we see that (3) can be replaced by

\[
\tag{$3'$}
N_{p,q}\cdot (u,v)=\frac{\|p\|^2-\|q\|^2}{2},
\]

where $N_{p,q}=(p_1,p_2,-q_1,-q_2)$.  It follows that for any scalar $t\neq 0,1$, $(u,v)\in C_{tp,tq}$ must satisfy

\[
N_{tp,tq}\cdot (u,v)=t^2\frac{\|p\|^2-\|q\|^2}{2},
\]

or

\[
N_{p,q}\cdot (u,v)=t\frac{\|p\|^2-\|q\|^2}{2}.
\]

This means that if $(p,q)$ and $(p',q')$ are distinct scalar multiples of each other, $C_{p,q}$ and $C_{p',q'}$ are contained in disjoint hyperplanes and thus have empty intersection.  It remains to consider the case where $(p,q),(p',q')$ are distinct but not scalar multiplies of each other.  In this case, $N_{p,q}$ and $N_{p',q'}$ are not scalar multiples of each other either, and hence $C_{p,q},C_{p',q'}$ are contained in the intersection of two hyperplanes, which is a $2$-flat.  By Lemma \ref{Lem: 2flatIntersectionBound}, any such $2$-flat can contain at most $4$ points from $C_{p,q}$.

\end{proof}

\section{Proofs of Theorems \ref{Thm: MainResult} and \ref{Thm: TwoCircles}}
\label{Sec: Proofs}

\subsection{Proof of Theorem \ref{Thm: MainResult}}
We are now ready to prove that our curves satisfy the hypotheses of the Sharir-Zahl incidence bound (Theorem \ref{Thm: SharirZahl}).  We must show that no two curves in $\Gamma$ share a common irreducible component, and that $\Gamma$ belongs to a $4$-dimensional family.  \\

To prove no two curves of $\Gamma$ share a common component, we use our work in Section \ref{Sec: Behavior}.  By Lemma \ref{Lem: CurveIntersection} and the injectivity of $\f$, the intersection of any two curves of $\Gamma$ is finite.  Therefore, any common component must be zero dimensional, and hence an isolated point $(x_0,y_0)$.  Since $\f\times\f$ maps $Z(F_{p,q})\to C_{p,q}$, we may consider the point $(u_0,v_0):=(\f(x_0),\f(y_0))\in C_{p,q}$.  By Lemma \ref{Lem: DimensionOne}, $(u_0,v_0)$ is not isolated.  This contradicts the continuity of $\f^{-1}$. \\

To prove $\Gamma$ is contained in a $4$-dimensional family, we will work over the field of complex numbers temporarily. Thus, we let $p_1,p_2,q_1$, and $ q_2$ vary over $\C$ instead of $\R$. we observe that the coefficients of (complexified) curves in $\Gamma$ are polynomials in our parameters $p_1,p_2,q_1,q_2$, and these polynomials do not simultaneously vanish unless $p=q=0$. This map $\C^4 \setminus\{ 0\} \to \C^{\binom{D+2}{2}}$ is thus clearly a morphism of quasi-projective varieties. The natural surjection $\C^{\binom{D+2}{2}} \setminus\{0\}\surjects \PCD$ is also a morphism of quasi-projective varieties, and hence so is the composed map $\C^4 \setminus\{0\} \to \PCD$. 

Thus, $\Gamma$ is contained in the image of a morphism of quasi-projective varieties $\C^4\setminus\{0\}\to \PCD$. After taking the Zariski closure of the image, if necessary, we see that $\Gamma$ is contained in a variety in $\PCD$ of dimension $\leq 4$, by invoking, for example \cite[Theorem 11.12]{H13book}. Thus, now restricting ourselves to $\R$ and using real dimension, $\Gamma$ is contained in an family of dimension $\leq 4$ (since the Sharir-Zahl incidence bound gets worse as $s$ increases, this is enough). \\

Now that we have established that Theorem \ref{Thm: SharirZahl} applies to our curves, we are now ready to complete the proof of Theorm \ref{Thm: MainResult}.  Applying Theorem \ref{Thm: SharirZahl} with $s=4$, we get

\[
I(\Pi,\Gamma)\lesssim |\Pi|^{1/2}|\Gamma|^{7/8+\e}+|\Pi|^{2/3}|\Gamma|^{2/3}+|\Pi|+|\Gamma|.
\]

By Lemma \ref{Lem: IncidenceProblemReduction}, we have

\[
Q(S,P)\lesssim |S||P|^{7/4+\e}+|S|^{4/3}|P|^{4/3}+|S|^2+|P|^2.
\]

By Theorem \ref{Thm: CauchySchwarzEnergyBound}, this gives

\[
\Delta(S,P)\gtrsim \min(|S||P|^{1/4-\e},|S|^{2/3}|P|^{2/3},|S|^2,|P|^2),
\]

as claimed.

\subsection{Proof of Theorem \ref{Thm: TwoCircles}}

We use the following well known theorem from additive combinatorics.

\begin{thm}[Ruzsa's Triangle Inequality]
Let $A,B,C$ be finite subsets of an abelian group.  Then,

\[
|A||B-C|\leq |A-B||A-C|.
\]
\end{thm}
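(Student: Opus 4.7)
The plan is to prove Ruzsa's inequality by exhibiting an explicit injection
\[
\phi\colon A\times (B-C)\hookrightarrow (A-B)\times (A-C),
\]
whereupon $|A|\,|B-C|\le |A-B|\,|A-C|$ follows immediately from comparing cardinalities of Cartesian products.

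The only genuine subtlety is that a given $d\in B-C$ admits many representations $d=b-c$ with $b\in B$ and $c\in C$, so the map must incorporate a canonical choice of representative to be well-defined. Accordingly, the first step is to fix, once and for all, a selector: for each $d\in B-C$ choose an arbitrary but thereafter fixed pair $(b_d,c_d)\in B\times C$ with $d=b_d-c_d$. Using this selector, define
\[
\phi(a,d)\;=\;(a-b_d,\;a-c_d).
\]
By construction $\phi(a,d)\in (A-B)\times(A-C)$, so the codomain is correct.

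For injectivity, I would recover both coordinates of the input from an output $(x,y)=\phi(a,d)$ in two steps. First, the difference $y-x=(a-c_d)-(a-b_d)=b_d-c_d=d$ recovers $d$ from $(x,y)$. Second, since the pair $(b_d,c_d)$ was fixed in advance as a function of $d$, once $d$ is known so is $b_d$, and hence $a=x+b_d$ is recovered. This shows $\phi$ is injective and completes the proof.

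There is essentially no obstacle to overcome here; the entire content of the argument is the trick of prefixing the choice of representatives $(b_d,c_d)$ so that the map can be inverted, rather than letting them depend on $a$ (which would break injectivity).
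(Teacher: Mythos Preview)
Your proof is correct and is the standard injection argument for Ruzsa's triangle inequality. The paper itself does not supply a proof of this statement; it merely quotes it as a ``well known theorem from additive combinatorics'' and then applies it, so there is nothing to compare against.
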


To prove Theorem \ref{Thm: TwoCircles}, we reduce matters to counting difference sets of angles.  We first observe there is a line $\ell$ passing through $O$ with the property that one side of $\ell$ contains a positive proportion of both $S$ and $P$.  To prove this, first note that we may assume that $P$ is on either the upper or lower semicircle by throwing away up to half of $P$.  We can then choose a point on the semicircle such that the remaining points in $P$ are evenly divided to the left and right.  Let $\ell$ be the line through that point and $O$.  By construction, both sides of $\ell$ contain at least $\frac{1}{4}|P|$ points of $P$.  Since one side must contain $\frac{1}{2}|S|$ points of $S$.  So, $\ell$ has the desired property. \\

For the remainder of the proof, we will assume both sets are contained entirely on one side of $\ell$.  For all $p$ in either set, let $\theta_p$ be the angle from $\ell$ to the line segment $\overline{Op}$, so $\theta_p\in [0,\pi]$ for all $p\in S,P$.  We also observe that $\angle pOq=\theta_p-\theta_q\in [-\pi,\pi]$.  Let $A=\{\theta_p:p\in S\}$ and $B=\{\theta_p:p\in P\}$. If the circles containing the sets have radii $r_1,r_2$, then for any $u\in S,p\in P$ we have

\[
\|p-u\|=\left\langle p-u,p-u\right\rangle =r_1^2+r_2^2-2r_1r_2\cos(\theta_p-\theta_u),
\]

hence $|\Delta(S,P)|\gtrsim|A-B|$ as $\cos$ is $2$-to-one on $[-\pi,\pi]$.  By assumption, the map $P^2\to[-\pi,\pi]$ given by $(p,q)\mapsto \theta_p-\theta_q$ is $\lesssim |P|^{2\alpha}$-to-one, so $|B-B|\gtrsim |B|^{2-2\alpha}$.  Applying Ruzsa's triangle inequality with $C=B$, we get

\[
|A||B-B|\leq |A-B|^2,
\]

or 

\[
|A|^{1/2}|B|^{1-\alpha}\leq |A-B|
\]

as claimed.

\bibliography{DiscreteGeometryReferences}{}
\bibliographystyle{plain}

	\bigskip
		{\footnotesize
		
		 \textit{E-mail address}, A.~McDonald: \texttt{a.mcdonald@rochester.edu}\\
		 \textit{E-mail address}, B.~McDonald: \texttt{bmcdon11@ur.rochester.edu}\\
		 \textit{E-mail address}, J.~Passant: \texttt{jpassant@ur.rochester.edu}\\
		 \textit{E-mail address}, A.~Sahay: \texttt{asahay@ur.rochester.edu}\\
		
		\textsc{Department of Mathematics, University of Rochester, Rochester, NY 14627}\par\nopagebreak}

\end{document}